\newcommand{\beqn}{\begin{equation}}
\newcommand{\eeqn}{\end{equation}}
\newcommand{\bean}{\begin{eqnarray}}
\newcommand{\eean}{\end{eqnarray}}
\DeclareMathAlphabet{\mathpzc}{OT1}{pzc}{m}{it}
\newtheorem{theorem}{Theorem}[section]
\newtheorem{lemma}[theorem]{Lemma}
\numberwithin{equation}{section}
\newcommand{\e}{\varepsilon}
\begin{document}
\title{Finite time singularity in a MEMS model revisited}
\thanks{Partially supported by the French-German PROCOPE project 30718Z}

\author{Philippe Lauren\c{c}ot}
\address{Institut de Math\'ematiques de Toulouse, UMR~5219, Universit\'e de Toulouse, CNRS \\ F--31062 Toulouse Cedex 9, France}
\email{laurenco@math.univ-toulouse.fr}

\author{Christoph Walker}
\address{Leibniz Universit\"at Hannover\\ Institut f\" ur Angewandte Mathematik \\ Welfengarten 1 \\ D--30167 Hannover\\ Germany}
\email{walker@ifam.uni-hannover.de}

\keywords{MEMS - free boundary problem - finite time singularity}
\subjclass{35M30 - 35R35 - 35B44 - 35Q74}

\date{\today}

\begin{abstract}
A free boundary problem modeling a microelectromechanical system (MEMS) consisting of a fixed ground plate and a deformable top plate is considered, the plates being held at different electrostatic potentials. It couples a second order semilinear parabolic equation for the deformation of the top plate to a Laplace equation for the electrostatic potential in the device. The validity of the model is expected to break down in finite time when the applied voltage exceeds a certain value, a finite time singularity occurring then. This result, already known for non-positive initial configurations of the top plate, is here proved for arbitrary ones and thus now includes, in particular, snap-through instabilities.
\end{abstract}

\maketitle

%
%
\pagestyle{myheadings}
\markboth{\sc{Ph.~Lauren\c cot \& Ch.~Walker}}{\sc{Finite time singularity in a MEMS model}}

\section{Introduction}\label{sec1}

An important feature of microelectromechanical (MEMS) devices is the so-called \textit{pull-in} instability which occurs in principle when the potential difference applied across the device exceeds a certain threshold value \cite{Ja12, PB03}. For an idealized electrostatic MEMS actuator consisting of an elastic plate coated with a thin dielectric film held at potential one (after normalization) and suspended above a rigid conducting ground plate held at potential zero, this phenomenon corresponds to the touchdown of the elastic plate on the ground plate. It is of utmost importance to figure out whether and when it does take place. Indeed, though pull-in might be a sought-for behavior (in switching applications for instance) or not (for micro-mirrors, in particular), its possible occurrence anyway has a strong influence on the operating conditions of the MEMS device. From a mathematical viewpoint, touchdown takes place when the vertical deflection of the elastic plate goes beyond a certain value at some time $T_*$. More precisely, we assume that the ground and elastic plates have the same shape $D$, which is a smooth bounded domain of $\mathbb{R}^n$, $n=1,2$. The ground plate is assumed to be located at height $z=-1$ (so that it corresponds to the surface $D\times\{-1\}$ in $\mathbb{R}^{n+1}$) while the elastic plate at time $t\ge 0$ is the surface
$$
\{ (x,z)\in D\times \mathbb{R}\ ;\  z  = u(t,x) \} \subset \mathbb{R}^{n+1}\ ,
$$
where $u(t,x)$ denotes the deflection of the elastic plate in the vertical direction at position $x\in D$ and time $t$. The touchdown phenomenon then takes place at time $T_*>0$ if
\begin{equation}
\lim_{t\to T_*} \min_{x\in \bar{D}}\{u(t,x)\} = -1\ . \label{i1}
\end{equation}
It is actually rather well-understood for the so-called \textit{vanishing aspect ratio model}
\begin{equation}
\begin{split}
\partial_t u - \Delta u = - \frac{\lambda}{(1+u)^2}\ , & \qquad t>0\ , \ x\in D\ , \\
u = 0\ , & \qquad t>0\ , \ x\in \partial D\ , \\
u(0) = u^0\ , & \qquad x\in D\ ,
\end{split} \label{i2}
\end{equation}
where the parameter $\lambda$ is proportional to the square of the potential difference applied across the device before rescaling, and the initial condition satisfies $u^0>-1$ in $D$. A threshold value $\lambda_s$ of the parameter $\lambda$ is found for which there is no stationary solution to \eqref{i2} when $\lambda$ exceeds $\lambda_s$ while there is at least one stable stationary solution for $\lambda\in (0,\lambda_s)$. Similarly, for the evolution equation \eqref{i2}, there is a threshold value $\lambda_e(u^0)>0$ depending on the initial condition $u^0$ with the following properties: if $\lambda\in (0,\lambda_e(u^0))$, then there is a unique classical solution to \eqref{i2} which exists for all times and is well-separated from $-1$ on each finite time interval. On  the contrary, if $\lambda>\lambda_e(u^0)$, then the unique classical solution to \eqref{i2} exists only on a finite time interval $[0,T_*)$ and touchdown occurs at time $T_*$ as described in \eqref{i1}. Several results are also available for variants of \eqref{i2} where $\partial_t u$ is replaced by $\partial_t^2 u$ and/or $\Delta u$ is replaced by $\Delta^2 u$ with either clamped ($u=\partial_\nu u = 0$ on $D$) or pinned ($u=\Delta u = 0$ on $D$) boundary conditions. The results, though, are less complete and several gray areas persist. We refer to \cite{EGG10, FMPS07, GPW05, KLNT15, LWBible, LLZ14, LL12, Pe02} and the references therein for a more complete description of the available results.

Far less is known for a more complex and more precise model for MEMS devices, which describes not only the dynamics of the deflection in the vertical direction $u$, but also that of the electrostatic potential $\psi_u$ between the two plates. When $D:=(-1,1)$,  it reads
\begin{subequations}\label{u}
\begin{equation}\label{equ}
\partial_t u - \partial_x^2 u = - \lambda\, g(u)\ ,\quad x\in D\ ,\qquad t>0\ ,
\end{equation}
with clamped boundary conditions
\begin{equation}\label{bcu}
u(t,\pm 1)=0\ ,\quad t>0\ ,
\end{equation}
and initial condition
\begin{equation}\label{icu}
u(0,x)=u^0(x)\ ,\quad x\in D\ ,
\end{equation}
the electrostatic force $g(u)$ being given by
\begin{equation}
g(u(t))(x) := \varepsilon^2\ |\partial_x\psi_u(t,x,u(t,x))|^2 + |\partial_z\psi_u(t,x,u(t,x))|^2\ , \qquad t>0\ , \ x\in D\ . \label{gu}
\end{equation}
\end{subequations}
\begin{subequations}\label{psi}
The dimensionless electrostatic potential $\psi_u=\psi_u(t,x,z)$ is defined in the region
$$
\Omega(u(t)) := \left\{ (x,z)\in D\times (-1,\infty)\ :\ -1 < z < u(t,x) \right\}\,
$$
between the ground plate and the elastic plate and satisfies a rescaled Laplace's equation
\begin{equation}\label{eqpsi}
\varepsilon^2\partial_x^2\psi_u + \partial_z^2\psi_u =0\ ,\quad (x,z)\in \Omega(u(t))\ ,\quad t>0\ ,
\end{equation}
the parameter $\varepsilon>0$ being the so-called \textit{aspect ratio} of the device, that is, the ratio between the vertical and horizontal directions. Laplace's equation is supplemented with non-homogeneous boundary conditions
\begin{equation}\label{bcpsi}
\psi_u(t,x,z)=\frac{1+z}{1+u(t,x)}\ ,\quad (x,z)\in  \partial\Omega(u(t))\ ,\quad t>0\ .
\end{equation}
\end{subequations}
In particular, $\psi_u(t,x,u(t,x))=1$ and $\psi_u(t,x,-1)=0$ for $x\in D$ as required. The equation~\eqref{i2} for $n=1$ is formally derived from \eqref{u}, \eqref{psi} by setting $\e=0$ since then the solution to \eqref{psi} is given explicitly by \eqref{bcpsi} for all $x\in \Omega(u(t))$ and $t>0$. In contrast to \eqref{i2}, the initial-boundary value problem \eqref{u}, \eqref{psi} features a rather intricate coupling between the two unknowns $u$ and $\psi_u$. Indeed, the source term in \eqref{equ} governing the evolution of $u$ is proportional to the square of the trace of the gradient of $\psi_u$ on the elastic plate. The electrostatic potential, in turn, solves an elliptic equation which involves a non-smooth domain varying with $u$. Thus, the source term in \eqref{equ} depends in a nonlocal and nonlinear way on $u$. Still, it is shown in \cite[Theorem~1]{ELW14} that \eqref{u}, \eqref{psi} is locally well-posed in 
$$
S_q^2(D) := \left\{v\in W_{q}^2(D)\,;\, v(\pm 1)=0  \;\text{ and }\; -1< v(x) \text{ for } x\in D \right\}\ ,
$$  
when $q>2$. It was further reported in \cite[Theorem~2]{ELW14} that, under the additional assumption that $u^0$ is non-positive, the corresponding solution to \eqref{u}, \eqref{psi} cannot exist globally if $\lambda$ exceeds some value $\lambda_e(u^0,\varepsilon)$ depending on $u^0$ and $\varepsilon$: indeed, for $\lambda>\lambda_e(u^0,\varepsilon)$ there is $T_m\in (0,\infty)$ depending on $\lambda$, $u^0$, and $\varepsilon$ such that
\begin{equation}
\lim_{t\to T_m} \min_{x\in [-1,1]}\{u(t,x)\} = -1 \quad \text{ or }\quad \lim_{t\to T_m} \|u(t)\|_{W_q^2} = \infty\, . \label{i3}
\end{equation}
According to the discussion above, only the first singularity in \eqref{i3} corresponds to the pull-in instability (recall \eqref{i1}), and this is the sole expected to occur. Though we have been unable to rule out the blowup of the Sobolev norm up to now, we point out that the non-positivity of the initial condition is important in the proof of \eqref{i3} as it entails the upper bound $u(t)\le 0$ as long as it exists, thanks to the non-positivity of the right-hand side of \eqref{equ} and the comparison principle. The purpose of this note is to prove that the non-positivity assumption on $u^0$ can be relaxed and improved to the weaker assumption that $u^0$ is simply bounded from above. It is worth mentioning that relaxing the sign condition on $u^0$ is not just a mere mathematical improvement, but is physically relevant, for instance, in the study of the so-called \textit{snap-through} instability, where the shape of the plate is initially an arch (such as $u^0(x) = h(1+\cos(\pi x))$ for $x\in D$) or a bell \cite{DB09, KISSC08, OY10}. Also, non-positivity plays a crucial role in the proof of the occurrence of a finite time singularity in a related MEMS model \cite{Li16}. More precisely, we prove the following result.

\begin{theorem}\label{th1}
Let $\varepsilon>0$ and $q>2$ and consider $u^0\in S_q^2(D)$. There is $\lambda^*(u^0,\varepsilon)>0$ depending on $\varepsilon$ and $u^0$ such that, if $\lambda>\lambda^*$, then the maximal existence time $T_m$ of the corresponding solution $(u,\psi_u)$ to \eqref{u}, \eqref{psi} is finite and
$$
\lim_{t\to T_m} \min_{x\in [-1,1]}\{u(t,x)\} = -1 \;\;\text{ or }\;\; \lim_{t\to T_m} \|u(t)\|_{W_q^2} = \infty\ . 
$$
\end{theorem}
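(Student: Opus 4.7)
My plan is to extend the eigenfunction (Kaplan-type) argument used in the proof of \cite[Theorem~2]{ELW14} by pairing it with a parabolic comparison that replaces the sign assumption $u^0\le 0$ by a uniform upper bound on $u$. Let $v$ solve the homogeneous heat equation $\partial_t v-\partial_x^2 v=0$ on $(0,\infty)\times D$ with $v(t,\pm 1)=0$ and $v(0)=u^0$. Since $g(u)\ge 0$ by \eqref{gu}, the difference $u-v$ vanishes on the parabolic boundary and satisfies $\partial_t(u-v)-\partial_x^2(u-v)=-\lambda g(u)\le 0$, so the parabolic maximum principle yields $u(t,x)\le v(t,x)\le M_0:=\max\{0,\max_{\bar D} u^0\}$ on $[0,T_m)\times D$.

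Next, let $\varphi_1>0$ denote the principal Dirichlet eigenfunction of $-\partial_x^2$ on $D=(-1,1)$, with eigenvalue $\mu_1=\pi^2/4$, normalized by $\int_D\varphi_1\, dx=1$. Setting $E(t):=\int_D u(t,x)\,\varphi_1(x)\, dx$ and testing \eqref{equ} against $\varphi_1$, integration by parts (using $u(t,\pm 1)=\varphi_1(\pm 1)=0$) gives
\begin{equation*}
E'(t)+\mu_1\,E(t) \;=\; -\lambda \int_D g(u(t,x))\,\varphi_1(x)\, dx\ ,\qquad t\in[0,T_m)\ .
\end{equation*}
Observe that $E(t)\in(-1,M_0]$ throughout the existence interval, thanks to $u(t)>-1$ and the first step.

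The crucial ingredient is then a Jensen-type lower bound
\begin{equation*}
\int_D g(u(t,x))\,\varphi_1(x)\, dx \;\ge\; \frac{c_\varepsilon}{(1+E(t))^2}
\end{equation*}
for some $c_\varepsilon>0$ depending only on $\varepsilon$. In the vanishing aspect ratio limit $\varepsilon=0$ one has $g(u)(x)=(1+u(x))^{-2}$ and the bound reduces to Jensen's inequality for the convex function $y\mapsto (1+y)^{-2}$ on $(-1,\infty)$ with respect to the probability measure $\varphi_1\, dx$. For $\varepsilon>0$, the plan is to derive a comparable pointwise or weighted estimate directly from the nonlocal elliptic problem \eqref{psi}: since $\psi_u(x,\cdot)$ rises from $0$ to $1$ on each vertical slice $\{x\}\times(-1,u(x))$, Cauchy--Schwarz yields the interior estimate $\int_{-1}^{u(x)}|\partial_z\psi_u|^2\, dz\ge (1+u(x))^{-1}$, and one then aims to convert this vertical energy estimate into a trace estimate for $g(u)$ on the top plate by means of a Rellich-type identity obtained through multiplying \eqref{eqpsi} by a suitable test function carrying the weight $\varphi_1(x)$ and vanishing on the lateral walls $x=\pm 1$.

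Combining these ingredients yields the differential inequality
\begin{equation*}
E'(t) \;\le\; -\mu_1\,E(t) - \frac{\lambda\,c_\varepsilon}{(1+E(t))^2}\ ,\qquad t\in[0,T_m)\ ,
\end{equation*}
whose right-hand side is bounded above by $\mu_1-\lambda c_\varepsilon/(1+M_0)^2$ on the range $E\in(-1,M_0]$, hence strictly negative and bounded away from $0$ once $\lambda$ exceeds a threshold $\lambda^*(u^0,\varepsilon)$ depending on $M_0$, $c_\varepsilon$ and $\mu_1$. The ensuing linear decay of $E$ would drive $E(t)$ below $-1$ in finite time, contradicting $u(t)>-1$; hence $T_m<\infty$, and the dichotomy \eqref{i3}, which is part of the local well-posedness theory of \cite[Theorem~1]{ELW14}, then yields the stated conclusion. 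The chief obstacle is the third step: producing the Jensen-type lower bound for $\int_D g(u)\,\varphi_1\, dx$ from \eqref{psi} on the noncylindrical domain $\Omega(u(t))$ without recourse to the sign condition $u\le 0$ which was the key to the corresponding estimate in \cite{ELW14}.
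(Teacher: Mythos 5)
You have correctly identified the obstacle, but you have underestimated how severe it is, and your sketch does not actually overcome it. The ``Jensen-type'' lower bound
\begin{equation*}
\int_D g(u)\,\varphi_1\,dx\;\ge\;\frac{c_\varepsilon}{(1+E)^2}
\end{equation*}
is not what the Rellich-type computation from \cite{ELW14} delivers. What one obtains from \eqref{p8}--\eqref{p9} (multiplying $\varepsilon^2\partial_x^2\psi_u+\partial_z^2\psi_u=0$ by $\zeta_1\psi_u^{p}$ and integrating by parts on $\Omega(u)$) is a lower bound for $\int_D\zeta_1(1+\varepsilon^2|\partial_x u|^2)\gamma_m\,dx$, i.e.\ for the \emph{first} power of $\gamma_m$, with additional signed terms involving $u$. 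To reach $g(u)\sim(1+\varepsilon^2|\partial_x u|^2)\gamma_m^2$ one must then apply Young's inequality, and this unavoidably produces the extra term
\begin{equation*}
\frac{\varepsilon^2}{4\delta^2}\int_D\zeta_1\,|\partial_x u|^2\,dx
\end{equation*}
on the wrong side. With your linear functional $E(t)=\int_D u\,\varphi_1\,dx$, the identity $E'+\mu_1 E=-\lambda\int g(u)\varphi_1\,dx$ provides nothing with which to absorb this gradient term, and there is no reason for it to be controlled by $E$ alone. This is precisely why the paper works with the quadratic correction $E_\alpha=\int_D\zeta_1\bigl(u+\tfrac{\alpha}{2}u^2\bigr)\,dx$: differentiating $E_\alpha$ produces $-\alpha\int_D\zeta_1|\partial_x u|^2\,dx$ on the favourable side, and the specific choice $\alpha=\lambda\varepsilon^2/(\lambda\varepsilon^2+4\delta^2)$ in \eqref{p11} is tuned exactly so that the two gradient contributions cancel. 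Without this mechanism, the argument does not close for $\varepsilon>0$.

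Two further remarks. First, the sign condition $u^0\le 0$ in \cite{ELW14} was not needed to prove \eqref{p8}--\eqref{p9} (the paper states those hold regardless of sign); it was needed to keep $1+\alpha u\ge 1-\alpha\ge 0$ and to ensure $E_\alpha$ eventually lies in the region where the resulting right-hand side $\mathcal{F}_{p,\delta}$ is negative. This is where the new ingredient enters: since $\mathcal{F}_{p,\delta}$ in \eqref{p6} is increasing and positive for large $y$, one cannot run the ODE comparison from an arbitrary positive starting value, and the decomposition $u=v+w$ with $v(0)=(u^0)_+$, $w(0)\le 0$ is used in Lemma~\ref{lem2} to show $E_\alpha(t)\le C_0e^{-\mu_1 t}$, so $E_\alpha$ eventually enters the good region. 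Your version does not need this decay step, but only because the differential inequality you posit has a more favourable global structure than the one the estimates actually yield; its validity is exactly what is in question. Second, your initial comparison $u\le v\le M_0$ (with $v(0)=u^0$) is fine and is essentially the same tool the paper uses, so that part is sound.

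In short: the overall architecture (eigenfunction method plus parabolic comparison to handle the positive part of $u$) is the right one and matches the paper, but the crucial analytic step you flag as ``the chief obstacle'' is not resolved by the Rellich-type identity alone. The missing idea is the $\alpha$-weighted functional $E_\alpha$ and the algebraic choice \eqref{p11} of $\alpha$ that cancels the $|\partial_x u|^2$ terms, together with the decay estimate of Lemma~\ref{lem2} which compensates for the fact that the resulting $\mathcal{F}_{p,\delta}$ is only negative near $y=0$, not on all of $(-1,M_0]$.
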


A well-known technique to prove the occurrence of a finite time singularity in evolution equations is the so-called eigenfunction method \cite[Theorem~8]{Ka63}. Owing to the nonlocality of the right-hand side of \eqref{equ}, a direct application of this technique to \eqref{u}, \eqref{psi} seems to fail. Nevertheless, we developed in \cite{ELW14} a nonlinear version of this technique which allowed us therein to prove Theorem~\ref{th1} under the additional assumption that $u^0$ is non-positive, the latter being used in particular to control the supplementary nonlinear terms. Not surprisingly, the proof of Theorem~\ref{th1} herein relies on the same technique and borrows some steps of the proof of \cite[Theorem~2]{ELW14}. Besides some technical variations, the main new step in the proof is to use the non-positivity of the right hand side of \eqref{equ} and the decay properties of the linear heat equation to control in a suitable way the contribution of the positive part of $u$, see Lemma~\ref{lem2} below.

\section{Finite time singularity}

Let $\varepsilon>0$, $\lambda>0$, $q>2$, and consider $u^0\in S_q^2(D)$. Denoting the corresponding solution to \eqref{u}, \eqref{psi} by $(u,\psi_u)$ and its maximal existence time by $T_m$, we recall from \cite[Proposition~5]{ELW14} that 
\begin{equation}
u(t)\in S_q^2(D)\ , \qquad t\in [0,T_m)\ , \label{p0}
\end{equation}
and
$\psi_u(t)\in H^2(\Omega(u(t))$ for all $t\in [0,T_m)$. Consequently, 
$$
\gamma_m(t,x) := \partial_z \psi_u(t,x,u(t,x))\ , \qquad (t,x)\in [0,T_m) \times D\ ,
$$
is well-defined and, since 
$$
\partial_x \psi_u(t,x,u(t,x)) = - \partial_x u(t,x) \gamma_m(t,x)\ , \qquad (t,x)\in [0,T_m) \times D\ ,
$$
by \eqref{bcpsi}, the right-hand side of \eqref{equ} also reads
$$
- \lambda g(u) = - \lambda (1+\varepsilon^2 |\partial_x u|^2) \gamma_m^2\ .
$$
Introducing the principal eigenvalue $\mu_1 := \pi^2/4$ of the Laplace operator $-\partial_x^2$ in $L_2(D)$ with homogeneous Dirichlet boundary conditions and the corresponding eigenfunction $\zeta_1(x) := \pi \cos(\pi x/2)/4$, $x\in D$, we define 
$$
E_\alpha(t) := \int_D \zeta_1(x) \left( u(t,x) + \frac{\alpha}{2} u(t,x)^2 \right)\ \mathrm{d}x\ , \qquad t\in [0,T_m)\ ,
$$
where $\alpha\in [0,1]$ is to be determined later on. Observe that \eqref{p0} and the properties of $\zeta_1$ guarantee that
\begin{equation}
E_\alpha(t) > -1\ , \qquad t\in [0,T_m)\,,\label{p00}
\end{equation}
for all $\alpha\in [0,1]$. We shall show that for any sufficiently large value of $\lambda$ we can choose $\alpha\in [0,1]$ such that \eqref{p00} can only hold true when $T_m<\infty$. This then implies Theorem~\ref{th1}.

\medskip

The starting point of the analysis is the following upper bound on $E_\alpha$.

\begin{lemma}\label{lem2}
Let $\alpha\in [0,1]$ with $\alpha\le 2/(1+ (\max u^0)_+)$. There is a positive constant $C_0$ depending only on $\max u^0$ such that
\begin{equation}
E_\alpha(t) \le C_0 e^{-\mu_1 t}\ , \qquad t\in [0,T_m)\ . \label{p1}
\end{equation}
\end{lemma}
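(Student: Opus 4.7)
The plan is to differentiate $E_\alpha(t)$ along the flow \eqref{equ}, integrate by parts, and exploit the sign of the forcing to obtain a differential inequality of the form $E_\alpha'(t) \le -\mu_1 E_\alpha(t)$; combined with an upper bound on $E_\alpha(0)$ depending only on $\max u^0$, Gronwall's lemma will then yield \eqref{p1}.

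Writing $F(s) := s + (\alpha/2) s^2$, so that $F'(s) = 1 + \alpha s$ and $F''(s) = \alpha$, I would use \eqref{equ} to compute
\begin{equation*}
E_\alpha'(t) = \int_D \zeta_1 F'(u)\,\partial_t u\,\mathrm{d}x = \int_D \zeta_1 (1+\alpha u)\,\partial_x^2 u\,\mathrm{d}x - \lambda \int_D \zeta_1 (1+\alpha u)\,g(u)\,\mathrm{d}x.
\end{equation*}
For the linear part I would integrate by parts twice, discarding boundary contributions thanks to $u(t,\pm 1) = \zeta_1(\pm 1) = 0$, rewriting the cross term $\alpha \int_D \partial_x \zeta_1 \cdot u\,\partial_x u\,\mathrm{d}x$ as $(\alpha/2)\int_D \partial_x \zeta_1\,\partial_x(u^2)\,\mathrm{d}x$, and invoking $\partial_x^2 \zeta_1 = -\mu_1 \zeta_1$ to arrive at the clean identity
\begin{equation*}
\int_D \zeta_1 (1+\alpha u)\,\partial_x^2 u\,\mathrm{d}x = -\mu_1 E_\alpha(t) - \alpha \int_D \zeta_1 (\partial_x u)^2\,\mathrm{d}x.
\end{equation*}
The nonlinear piece is handled by observing that $\zeta_1 \ge 0$, that $g(u) \ge 0$ by \eqref{gu}, and that $1 + \alpha u(t,x) \ge 0$ since $\alpha \le 1$ and $u(t,x) > -1$ by \eqref{p0}. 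Together these yield $E_\alpha'(t) \le -\mu_1 E_\alpha(t)$, and hence $E_\alpha(t) \le E_\alpha(0)\,e^{-\mu_1 t}$ for all $t \in [0,T_m)$.

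It remains to bound $E_\alpha(0)$ by a constant depending only on $\max u^0$, which is where the sharper restriction $\alpha \le 2/(1+(\max u^0)_+)$ is exploited. Since $F'(s) = 1 + \alpha s \ge 1 - \alpha \ge 0$ on $[-1,\max u^0]$, the polynomial $F$ is nondecreasing there, so $F(u^0(x)) \le F(\max u^0)$ for all $x \in D$. When $\max u^0 \le 0$ this upper bound is non-positive; when $\max u^0 > 0$, the restriction $\alpha \le 2/(1+\max u^0)$ forces $\alpha \max u^0/2 \le 1$ and therefore $F(\max u^0) \le 2 \max u^0$. In either case $u^0(x) + (\alpha/2)(u^0(x))^2 \le 2(\max u^0)_+$ pointwise on $D$, so that
\begin{equation*}
E_\alpha(0) \le 2(\max u^0)_+ \int_D \zeta_1(x)\,\mathrm{d}x =: C_0
\end{equation*}
depends only on $\max u^0$, completing the proof. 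The only real subtlety I anticipate is the dual role of the hypotheses on $\alpha$: $\alpha \le 1$ is precisely what is needed to keep the nonlinear term non-positive, whereas the sharper bound $\alpha \le 2/(1+(\max u^0)_+)$ is what prevents $E_\alpha(0)$ from depending on the full profile of $u^0$ rather than solely on its maximum.
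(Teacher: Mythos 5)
Your proof is correct, but it takes a genuinely different and noticeably shorter route than the paper's. The paper decomposes $u = v + w$, where $v$ solves the homogeneous heat equation with initial datum $\max\{u^0,0\}$ and $w$ carries the forcing together with the non-positive part of $u^0$; it then uses the comparison principle to pin down the signs and sizes of $v$ and $w$, expands $E_\alpha$ as $\int_D \zeta_1[v + \tfrac{\alpha}{2}v^2 + \alpha vw + w(1+\tfrac{\alpha}{2}w)]\,\mathrm{d}x$, discards the last two (non-positive) terms, and exploits the $L_2$-decay of $v$. You instead differentiate $E_\alpha$ directly along the flow, re-deriving the identity
\begin{equation*}
\frac{\mathrm{d}}{\mathrm{d}t} E_\alpha = -\mu_1 E_\alpha - \alpha \int_D \zeta_1 |\partial_x u|^2\,\mathrm{d}x - \lambda\int_D \zeta_1(1+\alpha u)g(u)\,\mathrm{d}x
\end{equation*}
(which in the paper is only the opening step of the proof of Lemma~\ref{lem3}), drop the two non-positive terms to get $E_\alpha' \le -\mu_1 E_\alpha$, apply Gronwall, and then bound $E_\alpha(0)$ pointwise via the monotonicity of $s\mapsto s+\tfrac{\alpha}{2}s^2$. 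Your computation checks out, the resulting constant depends only on $\max u^0$, and the argument is arguably cleaner than the decomposition in the paper, which pays off by giving a more explicit geometric picture of how the positive part of $u$ is damped.

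One small inaccuracy in your closing remark: the sharper hypothesis $\alpha \le 2/(1+(\max u^0)_+)$ is not actually what your argument uses. Since $F'(s)=1+\alpha s\ge 1-\alpha\ge 0$ on $[-1,\infty)$ already follows from $\alpha\le 1$, the map $F$ is nondecreasing on the range of $u^0$ with only that assumption, and $F(\max u^0)\le (\max u^0)_+ + \tfrac12(\max u^0)_+^2$ already depends on $\max u^0$ alone. So $\alpha\le 1$ suffices for your proof; the extra restriction is genuinely needed in the paper's decomposition (to guarantee $1+\tfrac{\alpha}{2}w\ge 0$), and in the proof of Theorem~\ref{th1} when choosing $\delta$, but it is not load-bearing in your version of Lemma~\ref{lem2}.
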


\begin{proof}
Let $v$ and $w$ be the solutions to 
\begin{eqnarray}
\partial_t v - \partial_x^2 v = 0\ , \qquad (t,x)\in (0,T_m)\times D\ , & \qquad v(t,\pm1)=0\ , \qquad t\in (0,T_m)\ , \label{p2} \\
\partial_t w - \partial_x^2 w = - \lambda g(u)\ , \qquad (t,x)\in (0,T_m)\times D\ , & \qquad w(t,\pm1)=0\ , \qquad t\in (0,T_m)\ , \label{p3}
\end{eqnarray}
with initial conditions
\begin{equation}
v(0,x) = \max\{ u^0(x), 0\}\ , \quad w(0,x) = u^0 - \max\{ u^0(x), 0\}\ , \qquad x\in D\ . \label{p4}
\end{equation}
Since $w(0,x) \le 0 \le v(0,x) \le (\max u^0)_+$ for $x\in D$, we infer from \eqref{p2}, \eqref{p3}, \eqref{p4}, and the comparison principle that
$$
w(t,x) \le 0 \le v(t,x) \le (\max u^0)_+\ , \qquad (t,x)\in (0,T_m)\times D\ , 
$$
and
$$
\|v(t)\|_2 \le e^{-\mu_1 t} \|v(0)\|_2 \le \sqrt{2}  (\max u^0)_+ e^{-\mu_1 t} \ , \qquad t\in (0,T_m)\ .
$$
Also,
$$
w(t,x) = u(t,x) - v(t,x) \ge -1 - (\max u^0)_+\ , \qquad (t,x)\in (0,T_m)\times D\ ,
$$
so that
$$
1 + \frac{\alpha}{2} w(t,x) \ge 1 - \frac{\alpha}{2} \left( 1 + (\max u^0)_+ \right) \ge 0\ , \qquad (t,x)\in (0,T_m)\times D\ ,
$$
thanks to the constraint on $\alpha$. Therefore, for $t\in (0,T_m)$,
\begin{align*}
E_\alpha(t) & = \int_D \zeta_1 \left[ v(t) + \frac{\alpha}{2} v(t)^2 + \alpha v(t) w(t) + w(t) \left( 1 + \frac{\alpha}{2} w(t) \right) \right]\ \mathrm{d}x \\
& \le \int_D \zeta_1 \left[ v(t) + \frac{\alpha}{2} v(t)^2 \right]\ \mathrm{d}x  \le \frac{\pi}{4} \left( \sqrt{2} \|v(t)\|_2 + \frac{\|v(t)\|_2^2}{2} \right) \\
& \le \pi \left[ (\max u^0)_+ + (\max u^0)_+^2 \right] e^{-\mu_1 t}\ ,
\end{align*}
and the proof is complete.
\end{proof}

We next derive a differential inequality for $E_\alpha$. Though the proof is quite similar to that performed in \cite{ELW14}, we nevertheless recall it here not only for the sake of completeness but also to shed some light on the importance of choosing $\alpha>0$. 

\begin{lemma}\label{lem3}
Let $p\ge 1$ and $\delta>0$. If 
\begin{equation}
\alpha := \frac{\lambda \varepsilon^2}{\lambda \varepsilon^2 + 4 \delta^2}\in (0,1)\ , \label{p11}
\end{equation}
then
\begin{equation}
\frac{\mathrm{d}}{\mathrm{d}t} E_\alpha \le \mathcal{F}_{p,\delta}(E_\alpha)\ , \qquad t\in [0,T_m)\ , \label{p5}
\end{equation}
where
\begin{equation}
\mathcal{F}_{p,\delta}(y) := \mu_1 +  \frac{4\delta\lambda}{p(\lambda \varepsilon^2 + 4 \delta^2)} \left[ \frac{\mu_1 \varepsilon^2}{p} + \frac{p}{4\delta} +  \frac{p \mu_1 \varepsilon^2}{p+1} y - \frac{1}{1+y} \right]\ , \qquad y>-1\ . \label{p6}
\end{equation}
\end{lemma}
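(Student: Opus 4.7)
The plan is to time-differentiate $E_\alpha$, substitute the PDE \eqref{equ}, integrate by parts, and then bound the result by $\mathcal{F}_{p,\delta}(E_\alpha)$ using a combination of Young's and Jensen's inequalities. A direct computation, exploiting $\zeta_1(\pm 1)=u(t,\pm 1)=0$ together with $-\partial_x^2\zeta_1=\mu_1\zeta_1$, yields
$$
\frac{d}{dt}E_\alpha = -\mu_1 E_\alpha - \alpha\int_D\zeta_1(\partial_x u)^2\,dx - \lambda\int_D\zeta_1(1+\alpha u)\bigl(1+\varepsilon^2(\partial_x u)^2\bigr)\gamma_m^2\,dx.
$$
Since $u(t,\cdot)>-1$ by \eqref{p0} and $\alpha\in(0,1)$ by \eqref{p11}, the factor $1+\alpha u$ is non-negative, so all three contributions on the right-hand side are non-positive. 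The constant $\mu_1$ appearing in $\mathcal{F}_{p,\delta}$ will come from bounding $-\mu_1 E_\alpha\le\mu_1$, which follows from \eqref{p00}; the remaining two terms must be kept, not dropped, since they are the only source of the singular $-1/(1+y)$ contribution.

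The decisive algebraic step is a Young's inequality matched to the specific choice \eqref{p11}, and this is where the importance of $\alpha>0$ becomes apparent. The relation \eqref{p11} rearranges to $4\delta^2\alpha=\lambda\varepsilon^2(1-\alpha)$, which is precisely the identity needed so that a Young's inequality of the form $2ab\le a^2+b^2$, applied with $a=\sqrt{\alpha}|\partial_x u|$ and $b$ proportional to $\sqrt{\lambda(1+\alpha u)}\,\varepsilon|\partial_x u|\gamma_m$, couples the dissipation $\alpha\zeta_1(\partial_x u)^2$ to the mixed term $\lambda\varepsilon^2\zeta_1(1+\alpha u)(\partial_x u)^2\gamma_m^2$ without residual. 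Since $\partial_x u\,\gamma_m=-\partial_x\psi_u(t,x,u(t,x))$ by differentiating \eqref{bcpsi} along the free boundary, this step replaces both $(\partial_x u)^2$-contributions by a single expression controlled by pure $\gamma_m^2$-integrals.

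To produce the singular term $-1/(1+E_\alpha)$, I would apply Cauchy--Schwarz pointwise to the identity
$$
\int_{-1}^{u(t,x)}\partial_z\psi_u(t,x,z)\,dz=1,
$$
which follows from \eqref{bcpsi}, to obtain $\int_{-1}^{u}(\partial_z\psi_u)^2\,dz\ge 1/(1+u)$. Integrating this against $\zeta_1\,dx$, which is a probability measure on $D$ since $\int_D\zeta_1\,dx=1$, and using Jensen's inequality for the convex function $y\mapsto 1/(1+y)$ combined with $E_0\le E_\alpha$ (a consequence of $\alpha u^2\ge 0$), yields the desired $c/(1+E_\alpha)$ lower bound. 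A final Young's inequality with conjugate exponents $p+1$ and $(p+1)/p$, applied to a residual cross product involving $u$ and $\gamma_m^2$, separates off the linear-in-$E_\alpha$ term with coefficient $\tfrac{p\mu_1\varepsilon^2}{p+1}$ and the constants $\tfrac{\mu_1\varepsilon^2}{p}$ and $\tfrac{p}{4\delta}$ appearing in the bracket defining $\mathcal{F}_{p,\delta}$.

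The main obstacle I expect is the algebraic bookkeeping required to confirm that all coefficients in the final differential inequality collapse exactly to those of $\mathcal{F}_{p,\delta}$; in particular, \eqref{p11} is precisely the parameter choice that closes the first Young's inequality without loss, and the resulting prefactor must match $\tfrac{4\delta\lambda}{p(\lambda\varepsilon^2+4\delta^2)}=\tfrac{4\delta\alpha}{p\varepsilon^2}$ after simplification via \eqref{p11}. Since the argument is, by the authors' own statement, quite similar to that in \cite{ELW14}, I anticipate no genuinely new analytic difficulty beyond this matching of constants.
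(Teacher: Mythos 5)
Your overall strategy (differentiate $E_\alpha$, use $1+\alpha u>0$, invoke Young's inequality calibrated by \eqref{p11}, apply Jensen for the $1/(1+E_\alpha)$ term) matches the paper's, and your opening identity for $\frac{\mathrm{d}}{\mathrm{d}t}E_\alpha$ and your derivation of $\int_D\zeta_1/(1+u)\,\mathrm{d}x\ge 1/(1+E_\alpha)$ via Cauchy--Schwarz and Jensen are correct. However, there are two genuine gaps in the middle of the argument.

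First, the pointwise Young's inequality you propose --- pairing $a=\sqrt{\alpha}|\partial_x u|$ with $b\propto\sqrt{\lambda(1+\alpha u)}\,\varepsilon|\partial_x u|\gamma_m$ so that the dissipation $-\alpha\int\zeta_1|\partial_x u|^2$ absorbs the piece $-\lambda\varepsilon^2\int\zeta_1(1+\alpha u)|\partial_x u|^2\gamma_m^2$ --- does not close. The cross term $2ab$ still carries $|\partial_x u|^2\gamma_m$ (equivalently $|\partial_x u|\,|\partial_x\psi_u(\cdot,u)|$), and the variable factor $1+\alpha u$ prevents the constants from matching; equality $a=b$ is not an identity. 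In the paper, Young's inequality is applied in a different way: one first bounds $1+\alpha u\ge 1-\alpha$ to isolate $\int\zeta_1 g(u)\,\mathrm{d}x$, then uses the elementary pointwise bound $\gamma_m^2\ge\gamma_m/\delta-1/(4\delta^2)$ weighted by $\zeta_1(1+\varepsilon^2|\partial_x u|^2)$; the $\varepsilon^2|\partial_x u|^2/(4\delta^2)$ part of the error is exactly what cancels $\alpha\int\zeta_1|\partial_x u|^2$ under \eqref{p11}. So the role of \eqref{p11} is to annihilate the error of \emph{that} Young's step, not to couple the two $|\partial_x u|^2$-contributions directly.

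Second, and more seriously, the coefficient $\frac{p\mu_1\varepsilon^2}{p+1}$ multiplying $E_\alpha$ and the constant $\frac{\mu_1\varepsilon^2}{p}$ do \emph{not} come from a ``final Young's inequality with conjugate exponents $p+1$ and $(p+1)/p$.'' They arise from a Pohozaev-type integral identity, namely \eqref{p9} quoted from \cite[Lemma~10]{ELW14}, which expresses $\int_D\zeta_1(1+\varepsilon^2|\partial_x u|^2)\gamma_m\,\mathrm{d}x$ in terms of bulk integrals of $\psi_u$ over $\Omega(u)$ plus lower-order terms involving $\int_D\zeta_1 u\,\mathrm{d}x$. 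Together with the weighted Cauchy--Schwarz estimate \eqref{p8} (your Cauchy--Schwarz is the $p=1$ special case of it), this identity is what produces the lower bound \eqref{p10}. Without \eqref{p9} there is no way to pass from the $\gamma_m$-boundary integral to $1/(1+E_\alpha)$ and a term linear in $E_\alpha$: the structure of the Laplace problem in $\Omega(u)$ must enter, and a generic Young's inequality cannot reproduce it. This omission is the essential gap in your proposal.

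\end{document}
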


\begin{proof}
It readily follows from \eqref{equ} that, for $t\in [0,T_m)$,
\begin{align*}
\frac{\mathrm{d}}{\mathrm{d}t} E_\alpha & = \int_D \zeta_1 (1+\alpha u) (\partial_x^2 u - \lambda g(u))\ \mathrm{d}x \\
& = - \mu_1 E_\alpha - \alpha \int_D \zeta_1 |\partial_x u|^2\ \mathrm{d}x - \lambda \int_D \zeta_1 (1+\alpha u) g(u)\ \mathrm{d}x\ . 
\end{align*}
Since $\zeta_1\ge 0$ and $1+\alpha u\ge 1-\alpha$ in $D$, we conclude that
\begin{equation}
\frac{\mathrm{d}}{\mathrm{d}t} E_\alpha + \mu_1 E_\alpha + \alpha \int_D \zeta_1 |\partial_x u|^2\ \mathrm{d}x \le - \lambda (1-\alpha) \int_D \zeta_1 g(u) \, \mathrm{d}x\ , \qquad t\in [0,T_m)\ . \label{p7}
\end{equation}

The next step is to estimate the last term on the right hand side of \eqref{p7} in terms of $E_\alpha$. To this end, we recall the following results established in \cite[Lemma~9 \&\ Lemma~10]{ELW14} which do not rely on the sign of $u^0$. For $t\in [0,T_m)$ and $p\in [1,\infty)$, we have
\begin{equation}
\frac{4p}{(p+1)^2}\ \int_D \frac{\zeta_1(x)}{1+u(t,x)}\ \mathrm{d}x \le p\ \int_{\Omega(u(t))} \zeta_1(x)\ \psi_u(t,x,z)^{p-1}\ |\partial_z \psi_{u(t)}(t,x,z)|^2\ \mathrm{d}(x,z) \label{p8}
\end{equation}
and
\begin{align}
\int_D &\zeta_1(x)\ \left( 1 + \varepsilon^2\ |\partial_x u(t,x)|^2 \right)\ \gamma_m(t,x)\ \mathrm{d}x \nonumber\\
& = p \int_{\Omega(u(t))} \zeta_1(x)\ \psi_u(t,x,z)^{p-1} \left[ \varepsilon^2\ |\partial_x \psi_u(t,x,z)|^2 + |\partial_z \psi_u(t,x,z)|^2 \right] \mathrm{d}(x,z) \nonumber\\
& \quad + \frac{\mu_1 \varepsilon^2}{p+1} \int_{\Omega(u(t))} \zeta_1(x) \psi_u(t,x,z)^{p+1} \mathrm{d}(x,z) \nonumber\\ 
&  \quad -\ \frac{\mu_1 \varepsilon^2}{(p+1)(p+2)} - \frac{\mu_1\,\varepsilon^2}{p+1}\ \int_D \zeta_1(x)\ u(t,x)\ \mathrm{d}x\ . \label{p9}
\end{align}
Since $\psi_u\ge 0$ by the comparison principle, it follows from \eqref{p8}, \eqref{p9}, and the non-negativity of $\zeta_1$ that, for $t\in [0,T_m)$,
\begin{align*}
\int_D \zeta_1 (1+\varepsilon^2 |\partial_x u|^2) \gamma_m\ \mathrm{d}x & \ge p \int_{\Omega(u(t))} \zeta_1 \psi_u ^{p-1} |\partial_z \psi_u|^2 \mathrm{d}(x,z) \\
& \quad - \frac{\mu_1 \varepsilon^2}{(p+1)(p+2)} - \frac{\mu_1\,\varepsilon^2}{p+1}\ \int_D \zeta_1 u\ \mathrm{d}x \\ 
& \ge \frac{4p}{(p+1)^2} \int_D \frac{\zeta_1}{1+u}\ \mathrm{d}x - \frac{\mu_1 \varepsilon^2}{p^2} - \frac{\mu_1 \varepsilon^2}{p+1} E_\alpha \\
& \ge \frac{4p}{(p+1)^2} \int_D \frac{\zeta_1}{1+u+ \alpha u^2/2}\ \mathrm{d}x - \frac{\mu_1 \varepsilon^2}{p^2} - \frac{\mu_1 \varepsilon^2}{p+1} E_\alpha \\
& \ge \frac{1}{p} \int_D \frac{\zeta_1}{1+u+ \alpha u^2/2}\ \mathrm{d}x - \frac{\mu_1 \varepsilon^2}{p^2} - \frac{\mu_1 \varepsilon^2}{p+1} E_\alpha \ .
\end{align*}
Using Jensen's inequality we end up with
\begin{equation}
\int_D \zeta_1 (1+\varepsilon^2 |\partial_x u|^2) \gamma_m\ \mathrm{d}x \ge \frac{1}{p} \frac{1}{1+E_\alpha} - \frac{\mu_1 \varepsilon^2}{p^2} - \frac{\mu_1 \varepsilon^2}{p+1} E_\alpha \ . \label{p10}
\end{equation}
We now deduce from Young's inequality that, for any $\delta>0$, 
$$
\int_D \zeta_1 (1+\varepsilon^2 |\partial_x u|^2) \gamma_m\ \mathrm{d}x \le \delta \int_D \zeta_1 (1+\varepsilon^2 |\partial_x u|^2) \gamma_m^2\ \mathrm{d}x + \frac{1}{4\delta} \int_D \zeta_1 (1+\varepsilon^2 |\partial_x u|^2)\ \mathrm{d}x\ .
$$
Therefore, by \eqref{p10},
\begin{align*}
\int_D \zeta_1 g(u)\ \mathrm{d}x & = \int_D \zeta_1 (1+\varepsilon^2 |\partial_x u|^2) \gamma_m^2\ \mathrm{d}x \\
& \ge \frac{1}{\delta} \int_D \zeta_1 (1+\varepsilon^2 |\partial_x u|^2) \gamma_m\ \mathrm{d}x - \frac{1}{4\delta^2} \int_D \zeta_1 (1+\varepsilon^2 |\partial_x u|^2)\ \mathrm{d}x \\
& \ge \frac{1}{\delta p} \frac{1}{1+E_\alpha} - \frac{\mu_1 \varepsilon^2}{\delta p^2} - \frac{\mu_1 \varepsilon^2}{\delta (p+1)} E_\alpha - \frac{1}{4\delta^2} - \frac{\varepsilon^2}{4\delta^2} \int_D \zeta_1 |\partial_x u|^2\ \mathrm{d}x\ .
\end{align*}
Combining the above inequality with \eqref{p7} gives
\begin{align*}
\frac{\mathrm{d}}{\mathrm{d}t} E_\alpha + \mu_1 E_\alpha + \alpha \int_D \zeta_1 |\partial_x u|^2\ \mathrm{d}x & \le - \frac{\lambda (1-\alpha)}{\delta p} \left[ \frac{1}{1+E_\alpha} - \frac{\mu_1 \varepsilon^2}{p} - \frac{p}{4\delta} - \frac{p \mu_1 \varepsilon^2}{p+1}  E_\alpha  \right] \\
& \quad + \frac{\lambda (1-\alpha) \varepsilon^2}{4\delta^2} \int_D \zeta_1 |\partial_x u|^2\ \mathrm{d}x \ .
\end{align*}
Owing to the choice \eqref{p11} of $\alpha$, we see that
$$
\alpha = \frac{\lambda (1-\alpha) \varepsilon^2}{4\delta^2}\ ,
$$
so that the terms involving $|\partial_x u|^2$ cancel in the previous inequality, and we end up with
$$
\frac{\mathrm{d}}{\mathrm{d}t} E_\alpha \le - \mu_1 E_\alpha + \frac{4\delta\lambda}{p(\lambda \varepsilon^2 + 4 \delta^2)} \left[ \frac{\mu_1 \varepsilon^2}{p} + \frac{p}{4\delta} + \frac{p\mu_1 \varepsilon^2}{p+1} E_\alpha - \frac{1}{1+E_\alpha} \right]\ ,
$$
hence \eqref{p5} as $E_\alpha> -1$ by \eqref{p00}.
\end{proof}

\medskip

\begin{proof}[Proof of Theorem~\ref{th1}]
Let $p\ge 1$ and $\delta>0$. We investigate the properties of the function $\mathcal{F}_{p,\delta}$ defined in \eqref{p6} and aim at finding values of the yet undetermined parameters $p\ge 1$ and $\delta>0$ which are suitable for our purpose. To begin with, we set 
$$
\alpha = \frac{\lambda \varepsilon^2}{\lambda \varepsilon^2 + 4 \delta^2}
$$
as in \eqref{p11} and require that
$$
\frac{\lambda \varepsilon^2}{\lambda \varepsilon^2 + 4 \delta^2} \le \frac{2}{1 + (\max u^0)_+}\ ,
$$
that is, 
\begin{equation}
\delta \ge \chi_\varepsilon \frac{\sqrt{\lambda}}{2}\ , \quad \chi_\varepsilon := \varepsilon \sqrt{\frac{\left( (\max u^0)_+ - 1 \right)_+}{2}}\ . \label{p12}
\end{equation}
Then, according to Lemma~\ref{lem2}, the positive part of $E_\alpha(t)$ decays rapidly as time increases, so that only the behavior of $\mathcal{F}_{p,\delta}$ in a neighborhood of the interval $(-1,0]$ is expected to matter in the following. We first notice that $\mathcal{F}_{p,\delta}$ is increasing on $(-1,\infty)$ with 
$$
\mathcal{F}_{p,\delta}(0) = \mu_1 + \frac{4\delta\lambda}{p(\lambda \varepsilon^2 + 4 \delta^2)} \left( \frac{\mu_1 \varepsilon^2}{p} + \frac{p}{4\delta} - 1 \right)\ .
$$
We now choose $p=1+2\mu_1 \varepsilon^2$ and $\delta = \chi \sqrt{\lambda}/2$ with $\chi := \max\{1 , \chi_\varepsilon\}$, the parameter $\chi_\varepsilon$ being defined in \eqref{p12}. Then
\begin{align*}
\mathcal{F}_{p,\delta}(0) & = \mu_1 + \frac{2\chi \sqrt{\lambda}}{(1+2\mu_1 \varepsilon^2)(\chi^2 + \varepsilon^2)} \left( \frac{\mu_1 \varepsilon^2}{1+2\mu_1 \varepsilon^2} + \frac{1+2\mu_1 \varepsilon^2}{2\chi \sqrt{\lambda}} - 1 \right) \\
& \le \mu_1 + \frac{2\chi \sqrt{\lambda}}{(1+2\mu_1 \varepsilon^2)(\chi^2 + \varepsilon^2)} \left( \frac{1+2\mu_1 \varepsilon^2}{2\chi \sqrt{\lambda}} - \frac{1}{2} \right) \\
& \le \mu_1 +  \frac{\chi}{(1+2\mu_1 \varepsilon^2)(\chi^2+\varepsilon^2)} \left( \frac{1+2\mu_1 \varepsilon^2}{\chi} - \sqrt{\lambda} \right) \\
& \le \frac{\chi}{(1+2\mu_1 \varepsilon^2)(\chi^2+\varepsilon^2)} \left( \frac{1+2\mu_1 \varepsilon^2}{\chi} \left( 1 + \mu_1 (\chi^2+\varepsilon^2) \right) - \sqrt{\lambda} \right)\ .
\end{align*}
Consequently, if 
\begin{equation}
\sqrt{\lambda} > \frac{1+2\mu_1 \varepsilon^2}{\chi} \left( 1 + \mu_1 (\chi^2+\varepsilon^2) \right)\ , \label{p13}
\end{equation}
then $\mathcal{F}_{p,\delta}(0)<0$ and there is $y_{p,\delta}>0$ such that \begin{equation}
\mathcal{F}_{p,\delta}(y_{p,\delta}) < 0\ . \label{p14}
\end{equation}

Now, assume for contradiction that $T_m=\infty$. According to Lemma~\ref{lem2}, there is $t_{p,\delta}>0$ such that $E_\alpha(t)\le y_{p,\delta}$ for $t\ge t_{p,\delta}$ and the monotonicity of $\mathcal{F}_{p,\delta}$ further entails that $\mathcal{F}_{p,\delta}(E_\alpha(t)) \le \mathcal{F}_{p,\delta}(y_{p,\delta})$ for $t\ge t_{p,\delta}$. We then infer from Lemma~\ref{lem3} that
$$
\frac{\mathrm{d}}{\mathrm{d}t} E_\alpha(t) \le \mathcal{F}_{p,\delta}(y_{p,\delta}) < 0\ , \qquad t\ge t_{p,\delta}\ ,
$$
hence, after integration, $E_\alpha(t) \le E_\alpha(t_{p,\delta}) + \mathcal{F}_{p,\delta}(y_{p,\delta}) (t-t_{p,\delta})$ for $t\ge t_{p,\delta}$. Thus, there is $T>t_{p,\delta}$ such that $E_\alpha(T)<-1$, which contradicts \eqref{p00}. Therefore, $T_m$ is finite and the proof of Theorem~\ref{th1} is complete.
\end{proof}

\section*{Acknowledgments}

Part of this work was done while PhL enjoyed the hospitality of the Institut f\"ur Angewandte Mathematik, Leibniz Universit\"at Hannover.



\end{document}